\def\thmsection{section}
\def\thmchangesection{changesection}
\def\thmchangechapter{changechapter}
\def\thmchange{change}
\def\thmplain{plain}
  \theoremstyle{break-italic}
  \newtheorem{satz}{Satz}
    \theoremstyle{break-italic}
    \newtheorem{satz}{Satz}[section]
      \theoremstyle{break-italic}
      \newtheorem{satz}{Satz}
         \theoremstyle{break-italic}
         \newtheorem{satz}{Satz}[section]
           \theoremstyle{break-italic}
           \newtheorem{satz}{Satz}[chapter]
             \theoremstyle{break-italic}
             \newtheorem{satz}{Satz}[section]
            \theoremstyle{break-italic}
            \newtheorem{satz}{Satz}[section]
\theoremstyle{break-italic}
\newtheorem{theorem}[satz]{Theorem}
\newtheorem{lemma}[satz]{Lemma}
\newtheorem{corollary}[satz]{Corollary}
\newtheorem{Proposition}[satz]{Proposition}
\newtheorem*{conjecture*}{Conjecture}
\theoremstyle{break-roman}
\newtheorem{definition}[satz]{Definition}
\newtheorem{example}[satz]{Example}
\newtheorem{remark}[satz]{Remark}
\newtheorem{conjecture}[satz]{Conjecture}
\theoremstyle{standard}
\newtheorem*{claim}{Claim}
\theoremstyle{varthm-roman}
\newtheorem*{varthm-roman}{}
\theoremstyle{varthm-italic}
\newtheorem*{varthm-italic}{}
\theoremstyle{varthm-roman-break}
\newtheorem*{varthm-roman-break}{}
\theoremstyle{varthm-italic-break}
\newtheorem*{varthm-italic-break}{}
\theoremstyle{varthm-roman-no-punctuation}
\newtheorem{varthm-roman-no-punctuation-numbered}[satz]{}
\theoremstyle{varthm-italic-no-punctuation}
\newtheorem{varthm-italic-no-punctuation-numbered}[satz]{}
\newenvironment{varthm-roman-numbered}[1]{
  \begin{varthm-roman-no-punctuation-numbered}
    \mbox{\rm\textbf{#1}}
  }{\end{varthm-roman-no-punctuation-numbered}}
\newenvironment{varthm-italic-numbered}[1]{
  \begin{varthm-italic-no-punctuation-numbered}
    \mbox{\rm\textbf{#1}}
  }{\end{varthm-italic-no-punctuation-numbered}}
\newenvironment{varthm-roman-break-numbered}[1]{
  \begin{varthm-roman-no-punctuation-numbered}
    \mbox{\rm\textbf{#1}\newline}
  }{\end{varthm-roman-no-punctuation-numbered}}
\newenvironment{varthm-italic-break-numbered}[1]{
  \begin{varthm-italic-no-punctuation-numbered}
    \mbox{\rm\textbf{#1}}\newline
  }{\end{varthm-italic-no-punctuation-numbered}}
\numberwithin{equation}{section}
\def\ex{\begin{example}
  }
  \def\eex{\end{example}}
\def\thr{\begin{theorem}}
\def\ethr{\end{theorem}}
\def\pro{\begin{Proposition}}
\def\epro{\end{Proposition}}
\def\coro{\begin{corollary}}
\def\ecoro{\end{corollary}}
\def\df{\begin{definition}}
\def\edf{\end{definition}}
\def\lm{\begin{lemma}}
\def\elm{\end{lemma}}
\def\pf{\begin{proof}}
\def\epf{\end{proof}}
\def\problem{\begin{problem}}
\def\eproblem{\end{problem}}
\def\ite{\begin{itemize}}
\def\hite{\end{itemize}}
\def\rem{\begin{remark}}
\def\erem{\end{remark}}
\def\cla{\begin{claim}}
\def\ecla{\end{claim}}
\def\conj{\begin{conjecture}}
\def\econj{\end{conjecture}}
\def\eex{{\accent"5E e}\kern-.385em\raise.2ex\hbox{\char'23}\kern-.08em}
\def\EES{{\accent"5E E}\kern-.5em\raise.8ex\hbox{\char'23 }}
\def\ow{o\kern-.42em\raise.82ex\hbox{
\vrule width .12em height .0ex depth .075ex \kern-0.16em \char'56}\kern-.07em}
\def\OW{O\kern-.460em\raise1.36ex\hbox{
\vrule width .13em height .0ex depth .075ex \kern-0.16em \char'56}\kern-.07em}
\def\dist{\mbox{dist}}
\begin{document}

\title[On Wielandt-Mirsky's conjecture for matrix polynomials]{On Wielandt-Mirsky's conjecture for matrix polynomials}
  
 \author{C\^{O}NG-TR\`{I}NH L\^{E}}
\address{Department of Mathematics, Quy Nhon University\\
 170 An Duong Vuong,  Quy Nhon, Binh Dinh, Vietnam}
\email{lecongtrinh@qnu.edu.vn}

\subjclass[2010]{15A18, 15A42, 15A60, 65F15}

\date{\today}


\keywords{Wielandt-Mirsky's conjecture; Hoffman-Wielandt's inequality;  matrix polynomial;  spectral variation}

\begin{abstract} 
In matrix analysis, the  \textit{Wielandt-Mirsky conjecture} states that  
$$
\dist(\sigma(A), \sigma(B)) \leq  \|A-B\|, 
$$
for any normal matrices $ A, B \in \mathbb C^{n\times n}$ and any operator norm $\|\cdot \|$ on $C^{n\times n}$. Here $\dist(\sigma(A), \sigma(B))$ denotes the optimal matching  distance  between the spectra of the matrices $A$ and $ B$. It was proved by A.J. Holbrook (1992) that this conjecture is \textit{false } in general. However it is true for the Frobenius distance and the Frobenius norm (the \textit{Hoffman-Wielandt inequality}). The main aim of this paper is to study  the Hoffman-Wielandt inequality and some weaker versions of the Wielandt-Mirsky conjecture for matrix polynomials.
 \end{abstract}

\maketitle
\section{Introduction}
Let $\mathbb C^{n\times n}$ denote the set of all $n\times n$ matrices whose entries in $\mathbb C$. Let $A, B \in \mathbb C^{n \times n}$ be  complex matrices whose spectra are $\sigma(A)=\{\alpha_1,\cdots,\alpha_n\}$ and $\sigma(B)=\{\beta_1\cdots,\beta_n\}$, respectively. The \textit{optimal matching  distance} between $\sigma(A)$ and $\sigma(B)$ is defined by 
 $$ \dist(\sigma(A),\sigma(B)):=\min_\theta \max_{j=1,\cdots,n} |\alpha_j - \beta_{\theta(j)}|,$$
where the minimum is taken over all permutations $\theta$ on the set $\{1,\cdots,n\}$.

One of the interesting  conjectures in matrix analysis is   the  \textit{Wielandt-Mirsky conjecture} \cite{Bh2007}  which states that for any normal matrices $ A, B \in \mathbb C^{n\times n}$, 
\begin{equation}\label{conjecture}
\dist(\sigma(A), \sigma(B)) \leq  \|A-B\|, 
\end{equation} 
where $\|\cdot \|$ denotes the operator bound  norm. 

This conjecture  has been proved to be true in the following special cases (cf. \cite{Ho}):  
\begin{itemize}
\item[(1)] $A$ and $B$ are Hermitian (Weyl, 1912);
\item[(2)] $A$, $B$ and $A-B$ are normal (Bhatia, 1982);
\item[(3)] $A$ is Hermitian and $B$ is skew-Hermitian (Sunder, 1982);
\item[(4)] $A$ and $B$ are constant multiples of unitaries (Bhatia and Holbrook, 1985).
\end{itemize}

It has been proven by Holbrook (1992, \cite{Ho}) that this conjecture is \textit{false} in general. However, if we replace the optimal matching distance  $\dist(\sigma(A), \sigma(B)) $ by the \textit{Frobenius distance} between the spectra of the matrices $A$ and $B$,
$$\dist_F(\sigma(A), \sigma(B)):=\min_\theta \big[\sum_{j=1}^n |\alpha_j - \beta_{\theta(j)}|^2\big]^{\frac{1}{2}}, $$
then we have the following \textit{Hoffman-Wielandt  inequality} \cite{HoWie} 
\begin{equation}\label{Hoffman-Wielandt}
\dist_F(\sigma(A), \sigma(B))\leq |A-B|_F
\end{equation}
for any normal matrices $A$ and $B$. Here $|A-B|_F$ denotes the \textit{Frobenius norm}\footnotemark  
   \footnotetext{For a matrix $A =(a_{ij}) \in \mathbb C^{n\times n},$ the \textit{Frobenius norm} of $A$ is defined by
   $$|A|_F:=\sqrt{trace(AA^*)} = \big(\sum_{i,j=1}^n |a_{ij}|^2\big)^{\frac{1}{2}}.$$ It is easy to see that $|A|_F^2=|AA^*|_F$.}  of the matrix $A-B$.
 
It is clear that 
$$\dist(\sigma(A),\sigma(B)) \leq \dist_F(\sigma(A),\sigma(B)) \leq \sqrt{n}\cdot \dist(\sigma(A),\sigma(B)). $$
Therefore it follows from the Hoffman-Wielandt inequality that \textit{the Wielandt-Mirsky  conjecture is true for the Frobenius norm}.

A weaker version of the Wielandt-Mirsky conjecture was proved by R. Bhatia, C. Davis and A. Mcintosh (1983, \cite{BDM1983}) that there exists a universal constant $c$ such that for any normal matrices $A, B \in \mathbb C^{n \times n}$ we have 
\begin{equation} \label{universal-constant}
\dist(\sigma(A), \sigma(B))\leq c \|A-B\|. 
\end{equation}

If we don't require the universality of the constant $c$ in the above inequality, for any normal matrix $A\in \mathbb C^{n\times n}$ and for any $B\in \mathbb C^{n\times n}$, we have (cf. \cite[p. 4]{Bh2007})
\begin{equation} \label{Anormal-Barbitrary}
\dist(\sigma(A),\sigma(B))\leq (2n-1)\|A-B\|.
\end{equation}

If $A$ is Hermitian and $B$ is arbitrary, we have the following inequality due to  W. Kahan (\cite[p.166]{K1967}:
\begin{equation} \label{Eq-Kahan}
\dist(\sigma({A}), \sigma({B})) \leq (\gamma_{n}+2)\|{A}-{B}\|,
\end{equation}
where $\gamma_n$ is a constant depending on the size $n$ of the matrices.

For a \textit{matrix polynomial} we mean the matrix-valued function of a complex variable of the form 
\begin{equation}\label{mp}
P(z)= A_m z^m + \cdots + A_1 z + A_0, 
\end{equation}
where $A_i\in \mathbb C^{n\times n}$ for all $i=0,\cdots,m$. If $A_m\not =0$, $P(z)$ is called a \textit{matrix polynomial of degree $m$}. When $A_m=I$, the identity matrix in $\mathbb C^{n \times n}$, the matrix polynomial $P(z)$ is called a \textit{monic}.

A number $\lambda \in \mathbb C$ is called an \textit{eigenvalue} of the matrix polynomial $P(z)$, if there exists a nonzero vector $x\in \mathbb C^n$ such that $P(\lambda)x=0$. Then the vector $x$ is called, as usual,  an \textit{eigenvector} associated  to the eigenvalue $\lambda$. Note that  each  eigenvalue of $P(z)$ is a root  of  the \textit{characteristic polynomial} $\det(P(z))$. 

For an $(m+1)$-tuple  $\bold{A}=(A_0,\cdots,A_m)$   of matrices $A_i\in \mathbb C^{n\times n}$,  the matrix polynomial 
$$ P_\bold{A}(z):= A_m z^m + \cdots + A_1 z + A_0$$
is called the \textit{matrix polynomial associated to $\bold{A}$}.  

The \textit{spectrum} of   the matrix polynomial $P_{\bold{A}}(z)$ is defined by 
$$\sigma(\bold{A}):=\sigma\big(P_{\bold{A}}(z)\big)=\{\lambda \in \mathbb C | \det(P_\bold{A}(\lambda))=0\}, $$
which is the set of all its eigenvalues. We should observe that for a matrix $A\in \mathbb C^{n \times n}$, its usual spectrum $\sigma(A)$ is actually the spectrum of the monic matrix polynomial $I z - A$.
Interested readers may refer to the book of I. Gohberg, P. Lancaster and L. Rodman \cite{GLR1982} for the theory of matrix polynomials and applications.

The main goal of this paper  is to  give some versions of the  Wielandt-Mirsky conjecture  for matrix polynomials. 

The paper is organized as follows. In Section \ref{section2} we  establish some Wielandt's  inequalities for   matrix polynomials. In Section \ref{section3} we give some weaker versions of the Wielandt-Mirsky  conjecture for monic matrix polynomials and for matrix polynomials whose leading coefficients are non-singular.

{\bf Notation and conventions.}   Throughout this paper,  by a positive integer $p$ we mean  $p\geq 1$ or $p=\infty$.\\
  For a matrix $A=(a_{ij}) \in \mathbb C^{n\times n}$,   a positive integer  $p$, and  a vector $p$-norm $|\cdot|_p$ on $\mathbb C^n$, the \textit{matrix $p$-norm} of $A$ is defined by 
$$ 
|A|_p:=\begin{cases}
\Big(\displaystyle\sum_{i,j=1}^n |a_{ij}|^p\Big)^{\frac{1}{p}} & (1\leq p < \infty)\\
\displaystyle\max_{i,j=1,\cdots,n} |a_{ij}| & (p=\infty).
\end{cases}
$$
In particular, $|A|_2=|A|_F$, the Frobenius norm.

The \textit{operator $p$-norm} of $A$ is defined by 
$$\|A\|_p:=\max\{ |Ax|_p: |x|_p =1\}. $$
Note that 
$$\|A\|_1:=\max_{j=1,\cdots,n} \sum_{i=1}^n a_{ij},  $$
$$\|A\|_\infty:=\max_{i=1,\cdots,n} \sum_{j=1}^n a_{ij}.$$
There are many relations between operator and matrix $p$-norms. Interested readers may refer to the paper of A. Tonge  \cite{T2000} and the references therein for more details.

\section{Some Wielandt's  inequalities for   matrix polynomials}\label{section2}
In the first part of this section we give some versions of the Hoffman-Wielandt  inequality (\ref{Hoffman-Wielandt}) for monic matrix polynomials. 

For a monic matrix polynomial 
$P_{\bold{A}}(z)= I\cdot  z^m + A_{m-1}z^{m-1}+ \cdots + A_1 z + A_0 $ with  $A_i\in \mathbb C^{n\times n}$,  the $(mn\times mn)$-matrix 
$$ C_{\bold{A}}:=\left[\begin{array}{ccccc}
0 & I & 0 & \cdots & 0\\
0 & 0 & I & \cdots & 0\\
\vdots & \vdots & \vdots &   & \vdots \\
0 & 0 & 0 & \cdots & I\\
-A_0 & -A_1 & -A_2 & \cdots & -A_{m-1}
\end{array}\right]
$$ 
is called the \textit{companion matrix} of the matrix polynomial $P_{\bold{A}}(z)$ or of the tuple $(A_0,\cdots,A_{m-1},I)$. 

 Note that the spectrum $\sigma(\bold{A})$ of $P_{\bold{A}}(z)$ coincides to the spectrum $\sigma(C_\bold{A})$ of  $ C_{\bold{A}}$ (cf. \cite{GLR1982}).

For  two $(m+1)$-tuples $\bold{A}=(A_0,\cdots,A_{m-1},I)$ and $\bold{\bar{A}}=(\bar{A}_0,\cdots,\bar{A}_{m-1},I)$, the relation between the operator norms   of their difference and those of their companion matrices is given in the following key lemma.  
 \lm  \label{norm-companion} Let  $\bold{A}=(A_0,\cdots,A_{m-1},I)$ and $\bold{\bar{A}}=(\bar{A}_0,\cdots,\bar{A}_{m-1},I)$ be $(m+1)$-tuples.   Then for any integer  $p>0$, we have 
 \begin{itemize}
 \item[(1)] $ |C_\bold{A} - C_{\bold{\bar{A}}}|_p = |\bold{A}-\bold{\bar{A}}|_p=
  \begin{cases}
  \big(\sum_{i=0}^{m-1} |A_i-\bar{A}_i|_p^p\big)^{\frac{1}{p}} & (1\leq p < \infty)\\
  \displaystyle\max_{i=0,\cdots,m} |A_i|_\infty  & (p=\infty).
  \end{cases}$ 
 \item[(2)] $ \|C_\bold{A} - C_{\bold{\bar{A}}}\|_p = \|\bold{A}-\bold{\bar{A}}\|_p\leq \displaystyle\sum_{i=0}^{m-1} \|A_i-\bar{A}_i\|_p$.
 \item[(3)] $ \|C_\bold{A} - C_{\bold{\bar{A}}}\|_1 =   \displaystyle\max_{i=0,\cdots,m-1} \|A_i-\bar{A}_i\|_1.$
  \end{itemize}
   \elm
  
 \pf We have the following expression of the difference of companion matrices  
 $$ C_\bold{A} - C_{\bold{\bar{A}}} = \left[\begin{array}{ccccc}
0 & 0 & 0 & \cdots & 0\\
0 & 0 & 0 & \cdots & 0\\
\vdots & \vdots & \vdots &   & \vdots \\
0 & 0 & 0 & \cdots & 0\\
\bar{A}_0-A_0 & \bar{A}_1-A_1 & \bar{A}_2-A_2 & \cdots & \bar{A}_{m-1}-A_{m-1}
\end{array}\right].
 $$
This implies  that the  matrix (resp. operator) norm of $C_\bold{A} - C_{\bold{\bar{A}}}$ is the same of that of the $m$-tuple $(\bar{A}_0-A_0, \ldots, \bar{A}_{m-1}-A_{m-1})$, i.e. we have  the first equalities  in (1) and (2). \\
 The second equality in (2) follows from the subadditivity of the operator $p$-norm. On the other hand, for an $(m+1)$-tuple  $\bold{A}=(A_0,\cdots,A_m)$ of matrices in $\mathbb C^{n\times n}$, by a direct computation, we have   
\begin{equation}\label{matrixnorm-1}
  |\bold{A}|_p:= \begin{cases}
  \big(\sum_{i=0}^m |A_i|_p^p\big)^{\frac{1}{p}} & (1\leq p < \infty)\\
  \displaystyle\max_{i=0,\cdots,m} |A_i|_\infty  & (p=\infty),
  \end{cases}
\end{equation}
thus we get the second equality of (1). Moreover, we have 
\begin{equation}\label{oper1norm}
 \|\bold{A}\|_1 = \max_{i=0,\cdots,m} \|A_i\|_1.
 \end{equation}
 Thus, we get (3). 
 \epf 
 As a consequence of Lemma \ref{norm-companion},  we  obtain the following   Hoffman-Wielandt  inequality for matrix polynomials.
 
 \pro  \label{thr-Hoffman-Wielandt-matrix} Let  $P_{\bold{A}}(z)= I\cdot  z^m + A_{m-1}z^{m-1}+ \cdots + A_1 z + A_0$ and $P_{\bold{\bar{A}}}(z)= I\cdot  z^m + \bar{A}_{m-1}z^{m-1}+ \cdots + \bar{A}_1 z + \bar{A}_0$ be monic matrix polynomials  whose corresponding companion matrices  $C_\bold{A}$ and $C_\bold{\bar{A}}$ are normal. Then we have 
$$\dist_F(\sigma(\bold{A}), \sigma(\bold{\bar{A}})) \leq |\bold{A}-\bold{\bar{A}}|_F. $$
\epro 
\pf Applying the  Hoffman-Wielandt inequality  (\ref{Hoffman-Wielandt})   for two normal  matrices $C_\bold{A}$ and    $C_{\bold{\bar{A}}}$ we get
$$\dist_F(\sigma(C_\bold{A}), \sigma(C_{\bold{\bar{A}}})) \leq |C_\bold{A}-C_{\bold{\bar{A}}}|_F. $$
 Then the theorem  follows from Lemma  \ref{norm-companion} (applying for $p=2$) with the observation that  $\sigma(\bold{A})=\sigma(C_\bold{A})$ and $\sigma(\bold{\bar{A}})=\sigma(C_\bold{\bar{A}})$. 
\epf
We should observe that 
\begin{equation}\label{companion-normal}
C_\bold{A} \mbox{ is normal if and only if } A_0 \mbox{ is unitary and } A_1=\ldots=A_{m-1}=0.
\end{equation}

 Therefore, Theorem \ref{thr-Hoffman-Wielandt-matrix}  yields  the following consequence.
 \coro \label{coro-Hoffman-Wielandt}
 Let   $P_{\bold{A}}(z)= I\cdot  z^m + A_0$ and $P_{\bold{\bar{A}}}(z)= I\cdot  z^m +  \bar{A}_0$ be momic matrix polynomials  with   $A_0$ and  $\bar{A}_0$ unitary. Then  we have   
$$\dist_F(\sigma(\bold{A}), \sigma(\bold{\bar{A}})) \leq |A_0-\bar{A}_0|_F. $$
\ecoro

One more interesting inequality was established by Wielandt for scalar  matrices which is stated as follows.
\begin{theorem}[{cf. \cite[Theorem 1.45]{Hi}}]
Let $A\in \mathbb C^{n \times n}$ be a Hermitian matrix such that for some numbers $a,b>0$ the inequalities $bI \leq A \leq aI$ hold.   Then for any orthogonal unit vectors $x,y \in \mathbb C^n$, the inequality  
\begin{equation}\label{Wielandt}
 |x^*Ay|^2 \leq \big(\dfrac{a-b}{a+b}\big)^2(x^*Ax)\cdot (y^*Ay)
\end{equation}
holds.
\end{theorem}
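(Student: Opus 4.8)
The plan is to reduce the statement to a $2\times 2$ Hermitian matrix and then to an elementary scalar inequality. Since $x,y$ are orthonormal, I would first pass to the compression of $A$ onto the plane $\mathrm{span}(x,y)$: in the orthonormal basis $\{x,y\}$ this is the $2\times 2$ Hermitian matrix
$$
M=\begin{pmatrix} x^*Ax & x^*Ay\\ y^*Ax & y^*Ay\end{pmatrix}=\begin{pmatrix}\alpha & \gamma\\ \bar\gamma & \delta\end{pmatrix},
$$
where $\alpha=x^*Ax$, $\delta=y^*Ay$ and $\gamma=x^*Ay$. The hypothesis $bI\le A\le aI$ means $b|v|^2\le v^*Av\le a|v|^2$ for every $v$, and restricting $v$ to $\mathrm{span}(x,y)$ (writing $v=w_1x+w_2y$, so that $v^*Av=w^*Mw$ and $|v|^2=|w|^2$) shows $bI_2\le M\le aI_2$. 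In particular the two eigenvalues $\lambda_1,\lambda_2$ of $M$ lie in $[b,a]$, and $\alpha,\delta\in[b,a]$ are strictly positive because $b>0$.

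Next I would translate the claim into the eigenvalues of $M$. From the trace and determinant of a $2\times 2$ Hermitian matrix, $\lambda_1+\lambda_2=\alpha+\delta$ and $\lambda_1\lambda_2=\det M=\alpha\delta-|\gamma|^2$, so $|x^*Ay|^2=|\gamma|^2=\alpha\delta-\lambda_1\lambda_2$. Substituting this, the desired inequality $|\gamma|^2\le\big(\tfrac{a-b}{a+b}\big)^2\alpha\delta$ is equivalent to
$$
\lambda_1\lambda_2\ \ge\ \Big(1-\big(\tfrac{a-b}{a+b}\big)^2\Big)\alpha\delta=\frac{4ab}{(a+b)^2}\,\alpha\delta.
$$
By the arithmetic--geometric mean inequality $\alpha\delta\le\big(\tfrac{\alpha+\delta}{2}\big)^2=\big(\tfrac{\lambda_1+\lambda_2}{2}\big)^2$, so it suffices to prove the sharper statement
$$
\lambda_1\lambda_2\ \ge\ \frac{ab}{(a+b)^2}\,(\lambda_1+\lambda_2)^2,
$$
which no longer refers to $\alpha,\delta$ at all.

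This last inequality is the heart of the matter and the step I expect to be the main obstacle, although it is ultimately elementary. I would prove it by homogeneity: writing $r=\lambda_1/\lambda_2$, the ratio $\lambda_1\lambda_2/(\lambda_1+\lambda_2)^2$ equals $g(r):=r/(1+r)^2$, a function that is symmetric under $r\mapsto 1/r$ and decreasing in $|\log r|$, hence minimized over the admissible range $r\in[b/a,\,a/b]$ at the endpoints. Evaluating $g(a/b)=ab/(a+b)^2$ gives exactly the required lower bound, with equality precisely when $\{\lambda_1,\lambda_2\}=\{a,b\}$. Combining this with the chain of reductions above yields $|x^*Ay|^2\le\big(\tfrac{a-b}{a+b}\big)^2(x^*Ax)(y^*Ay)$, completing the argument. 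The one point to check carefully is that the compression genuinely inherits the bounds $bI_2\le M\le aI_2$, so that $\lambda_1,\lambda_2$ lie in $[b,a]$; everything else is bookkeeping.
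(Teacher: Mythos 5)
Your proof is correct, but there is no proof in the paper to compare it with: the paper states this theorem purely as a quoted classical result (cf.\ Theorem 1.45 of Hiai--Petz) and uses it as an ingredient for the matrix-polynomial version that follows; it never proves it. Taken on its own, your argument is complete and sound. The compression step is justified properly (writing $v=w_1x+w_2y$ and using orthonormality gives $bI_2\le M\le aI_2$, hence $\lambda_1,\lambda_2\in[b,a]$); the trace/determinant identities give $|x^*Ay|^2=\alpha\delta-\lambda_1\lambda_2$, turning the claim into $\lambda_1\lambda_2\ge\frac{4ab}{(a+b)^2}\,\alpha\delta$; AM--GM then reduces this to the scalar inequality $\frac{\lambda_1\lambda_2}{(\lambda_1+\lambda_2)^2}\ge\frac{ab}{(a+b)^2}$, and your analysis of $g(r)=r/(1+r)^2$ is right: $g(r)=g(1/r)$, $g'(r)=(1-r)/(1+r)^3$, so on $[b/a,a/b]$ the minimum is at the endpoints with value $ab/(a+b)^2$. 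Two minor remarks. First, your parenthetical equality claim concerns only the scalar step; equality in the original inequality additionally forces $\alpha=\delta$ (equality in AM--GM), though this is irrelevant to the inequality itself. Second, this two-by-two compression route is a standard and arguably the most elementary proof of Wielandt's inequality, so as a self-contained replacement for the paper's citation it is a genuine improvement in completeness, at the cost of a few lines.
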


In the following we establish a version of this inequality for matrix polynomials.
\begin{theorem}
Let $P_A(\lambda)=\displaystyle\sum_{i=0}^d A_i \lambda^i$ be a matrix polynomial whose coefficient matrices satisfy 
$$bI\leq A_i\leq aI$$
 for some numbers $a,b >0$ and for all  $i=0,\ldots,d$. Then for any orthogonal unit  vectors $x,y \in \mathbb C^n$,
$$|x^*P_A(\lambda)y|^2 \leq \big(\dfrac{a-b}{a+b}\big)^2 (x^*P_A(|\lambda|)x) \cdot  (y^*P_A(|\lambda|)y).$$
\end{theorem}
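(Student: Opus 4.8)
The plan is to reduce the matrix-polynomial inequality to the scalar Wielandt inequality \eqref{Wielandt}, applied separately to each coefficient matrix, and then to recombine the resulting estimates by the Cauchy-Schwarz inequality. The starting observation is that each $A_i$ is Hermitian with $bI\leq A_i\leq aI$, so for the unit vectors $x,y$ the quadratic forms $x^*A_ix$ and $y^*A_iy$ are real and strictly positive (indeed bounded below by $b>0$, since $x^*A_ix\geq b\,|x|^2=b$). Consequently every square root appearing below is well defined, which is what makes the Cauchy-Schwarz step legitimate.

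First I would expand the bilinear form linearly in the coefficients, writing $x^*P_A(\lambda)y=\sum_{i=0}^d \lambda^i\,(x^*A_iy)$, so that the triangle inequality yields $|x^*P_A(\lambda)y|\leq \sum_{i=0}^d |\lambda|^i\,|x^*A_iy|$. Next I would apply the scalar Wielandt inequality \eqref{Wielandt} to each Hermitian matrix $A_i$, with the same bounds $a,b$ and the same orthogonal unit vectors $x,y$, obtaining $|x^*A_iy|\leq \frac{a-b}{a+b}\sqrt{(x^*A_ix)(y^*A_iy)}$. Substituting this into the previous estimate gives
$$|x^*P_A(\lambda)y|\leq \frac{a-b}{a+b}\sum_{i=0}^d |\lambda|^i\sqrt{(x^*A_ix)(y^*A_iy)}.$$

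The final step is to recognise the right-hand sum as an inner product of two nonnegative sequences and bound it by Cauchy-Schwarz. Writing $|\lambda|^i\sqrt{(x^*A_ix)(y^*A_iy)}=\sqrt{|\lambda|^i\,x^*A_ix}\cdot\sqrt{|\lambda|^i\,y^*A_iy}$, Cauchy-Schwarz gives
$$\sum_{i=0}^d |\lambda|^i\sqrt{(x^*A_ix)(y^*A_iy)}\leq \Big(\sum_{i=0}^d |\lambda|^i\,x^*A_ix\Big)^{1/2}\Big(\sum_{i=0}^d |\lambda|^i\,y^*A_iy\Big)^{1/2}.$$
Since $|\lambda|$ is a nonnegative real number, $\sum_{i=0}^d |\lambda|^i\,x^*A_ix=x^*P_A(|\lambda|)x$ and likewise $\sum_{i=0}^d |\lambda|^i\,y^*A_iy=y^*P_A(|\lambda|)y$, so squaring the combined estimate produces exactly the claimed inequality.

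I expect the only genuine subtlety — rather than a serious obstacle — to be the justification of the splitting in the Cauchy-Schwarz step: it is essential that the majorant be evaluated at $|\lambda|$ and not at $\lambda$, because the factors $|\lambda|^i$ must be nonnegative for the terms to decompose as products of two real square roots. This is precisely why the right-hand side of the theorem involves $P_A(|\lambda|)$, and it is the reason the positivity hypothesis $b>0$ (forcing each $A_i$ positive definite, hence $x^*A_ix>0$) is invoked at the very start.
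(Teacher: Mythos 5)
Your proof is correct and follows essentially the same route as the paper's: expand the bilinear form, apply the scalar Wielandt inequality \eqref{Wielandt} to each coefficient matrix $A_i$, and recombine via Cauchy--Schwarz, with the factors $|\lambda|^{i}$ split into two square roots so that the sums reassemble into $x^*P_A(|\lambda|)x$ and $y^*P_A(|\lambda|)y$. The only differences are cosmetic (you square at the end rather than working with squared quantities throughout, and you make the positivity of the quadratic forms explicit), so there is nothing to add.
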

\begin{proof}
It follows from (\ref{Wielandt}) that for each $i=0,\ldots,d$,  
 $$|x^*A_iy|\leq \big(\dfrac{a-b}{a+b}\big)(x^*A_ix)^{\frac{1}{2}}\cdot (y^*A_iy)^{\frac{1}{2}}.$$
 Then
 \begin{align*}
|x^*P_A(\lambda)y|^2 &= \big(\big|\sum_{i=0}^d (x^*A_iy)\lambda^i \big|\big)^2 \leq  \big( \sum_{i=0}^d |x^*A_iy|\cdot |\lambda|^i \big)^2\\
& \leq \big(\dfrac{a-b}{a+b}\big)^2 \big( \sum_{i=0}^d (x^*A_ix)^{\frac{1}{2}} |\lambda|^{\frac{i}{2}}\cdot (y^*A_iy)^{\frac{1}{2}}\cdot |\lambda|^{\frac{i}{2}} \big)^2\\
& \leq \big(\dfrac{a-b}{a+b}\big)^2 \big(\sum_{i=0}^d (x^*A_ix)  |\lambda|^{i}\big)  \big(\sum_{i=0}^d (y^*A_iy)  |\lambda|^{i}\big)\\
&= \big(\dfrac{a-b}{a+b}\big)^2 \big(x^*(\sum_{i=0}^d A_i  |\lambda|^{i})x\big)\big(y^*(\sum_{i=0}^d A_i  |\lambda|^{i})y\big)\\
&= \big(\dfrac{a-b}{a+b}\big)^2\big(x^*P_A(|\lambda|)x\big)\cdot \big(y^*P_A(|\lambda|)y\big),
 \end{align*}
where the last inequality follows from the classical Cauchy-Schwarz inequality. The proof is complete.
\end{proof}

\section{ Some weaker versions of the Wielandt-Mirsky conjecture for matrix polynomials } \label{section3}

In this section we give some estimations for  the optimal matching distance between the spectra of  matrix polynomials. 

\pro  \label{thr-distance-1} There exists a constant $c>0$ such that for every positive integer $p$ and   monic matrix polynomials $P_{\bold{A}}(z)= I\cdot  z^m + A_{m-1}z^{m-1}+ \cdots + A_1 z + A_0$ and $P_{\bold{\bar{A}}}(z)= I\cdot  z^m + \bar{A}_{m-1}z^{m-1}+ \cdots + \bar{A}_1 z + \bar{A}_0$ whose corresponding companion matrices  $C_\bold{A}$ and $C_\bold{\bar{A}}$ are normal we have 
$$\dist(\sigma(\bold{A}), \sigma(\bold{\bar{A}})) \leq c\|\bold{A}-\bold{\bar{A}}\|_p \leq c\displaystyle\sum_{i=0}^{m-1} \|A_i-\bar{A}_i\|_p.  $$
\epro 
\pf  It follows from the result of R. Bhatia, C. Davis and A. Mcintosh (see the inequality (\ref{universal-constant}) that  there exists a constant $c>0$ such that for every monic matrix polynomials $P_{\bold{A}}(z)$ and $P_{\bold{\bar{A}}}(z)$ whose corresponding companion matrices  $C_\bold{A}$ and $C_\bold{\bar{A}}$ are normal, we have
$$\dist(\sigma(C_\bold{A}), \sigma(C_{\bold{\bar{A}}})) \leq c\|C_\bold{A}-C_{\bold{\bar{A}}}\|_p. $$
  Then the theorem  follows from Lemma  \ref{norm-companion} (2) with the observation that  $\sigma(\bold{A})=\sigma(C_\bold{A})$ and $\sigma(\bold{\bar{A}})=\sigma(C_\bold{\bar{A}})$. 
\epf 
  Using again the observation (\ref{companion-normal}) we obtain the following consequence.
\coro \label{coro-distance-1-1}
 There exists a constant $c>0$ such that for every positive integer $p$ and monic matrix polynomials $P_{\bold{A}}(z)= I\cdot  z^m + A_0$ and $P_{\bold{\bar{A}}}(z)= I\cdot  z^m +  \bar{A}_0$ with   $A_0$ and  $\bar{A}_0$ unitary, we have   
$$\dist(\sigma(\bold{A}), \sigma(\bold{\bar{A}})) \leq c\|A_0-\bar{A}_0\|_p. $$
\ecoro
Similarly, applying the inequality (\ref{Anormal-Barbitrary}) and Lemma  \ref{norm-companion} (2) with the observation (\ref{companion-normal}) we obtain the following results.
 \coro \label{coro-distance-1-2}
 Let  $P_{\bold{A}}(z)= I\cdot  z^m + A_0$ and $P_{\bold{\bar{A}}}(z)= I\cdot  z^m +  \bar{A}_0$ be matrix polynomials with   $A_0$  and $\bar{A}_0$  unitary. Then  for every positive integer $p$ we have  
$$\dist(\sigma(\bold{A}), \sigma(\bold{\bar{A}})) \leq (2mn-1)\|A_0-\bar{A}_0\|_p. $$
\ecoro
 
A similar version of  (\ref{Eq-Kahan}) for monic matrix polynomials is given as follows, whose proof is similar to that of Proposition \ref{thr-Hoffman-Wielandt-matrix}. 

\pro \label{thr-distance-2} Let $P_{\bold{A}}(z)= I\cdot  z^m + A_{m-1}z^{m-1}+ \cdots + A_1 z + A_0$ and $P_{\bold{\bar{A}}}(z)= I\cdot  z^m + \bar{A}_{m-1}z^{m-1}+ \cdots + \bar{A}_1 z + \bar{A}_0$ be monic matrix polynomials. Assume that $C_\bold{A}$ is Hermitian. Then for every positive integer $p$ we have  
$$\dist(\sigma(\bold{A}), \sigma(\bold{\bar{A}})) \leq (\gamma_{m,n}+2)\|\bold{A}-\bold{\bar{A}}\|_p \leq (\gamma_{m,n}+2)\displaystyle\sum_{i=0}^{m-1} \|A_i-\bar{A}_i\|_p, $$
where $\gamma_{m,n}$ is a constant depending on $m$ and $n$. 
\epro 

\rem \rm The constant $\gamma_{m,n}$ have the following properties (cf. \cite[p. 3]{Bh2007}):
\begin{itemize}
\item[(1)] $\dfrac{2}{\pi} \ln (mn) - 0(1) \leq \gamma_{m,n} \leq \log_2 (mn) + 0.038.$
\item[(2)] A. Pokrzywa (1981, \cite{P1981}) proved that 
$$\gamma_{m,n}=\dfrac{2}{mn} \sum_{j=1}^{[mn/2]} \cot \dfrac{2j-1}{2mn}\pi.$$
\end{itemize}
\erem 

One of the condition for the companion matrix $C_\bold{A}$ to be  Hermitian is given as follows. 
\coro \label{coro-distance-2-1}  Let  $P_{\bold{A}}(z)= I\cdot  z^m + A_0$ and $P_{\bold{\bar{A}}}(z)= I\cdot  z^m +  \bar{A}_0$ be matrix polynomials with   $A_0$   unitary. Assume that $P_{\bold{A}}(z)$ has only real eigenvalues.   Then for every positive integer $p$ we have    
$$\dist(\sigma(\bold{A}), \sigma(\bold{\bar{A}})) \leq (\gamma_{m,n}+2)\|A_0-\bar{A}_0\|_p. $$
\ecoro 
\pf
It is well-known that a normal matrix $A\in \mathbb C^{n\times n}$ is Hermitian if and only if it has only real eigenvalues. Therefore, the normal matrix $C_\bold{A}$ is Hermitian if and only if it, whence the matrix polynomial $P_\bold{A}(z)$, has only real eigenvalues. Note also that in this case,  $C_\bold{A}$ is normal if and only if $A_0$ is unitary. Then the result follows from Proposition  \ref{thr-distance-2}.
\epf
\rem \rm 
There are some characterization for matrix polynomials  to  have only real eigenvalues. These kinds of matrix polynomials are sometime called \textit{weakly hyperbolic}.  The readers can refer to the work of  M. Al-Ammari and F. Tisseur (2012, \cite[Theorem 3.4]{AT2012}) and the references therein for more details characterization.
\erem 

In the following we will establish an estimation for the optimal matching distance between spectra of two arbitrary monic matrix polynomials. For the proof, we need the following  estimation given by Bhatia and Friedland (1981), Elsner (1982, 1985).  

\lm[{\cite[Theorem 20.4]{Bh2007}}] \label{bhatia-arbitrary} Let $A$ and $B$ be any $k \times k$-matrices.  Then the optimal matching distance between their eigenvalues are bounded as 
$$\dist(\sigma(A), \sigma(B)) \leq  c(k) \cdot  k^{\frac{1}{k}} \cdot  (2M)^{1-\frac{1}{k}}\cdot \|A-B\|^{\frac{1}{k}}, $$
where $M=\max\{\|A\|, \|B\|\}$, $\|\cdot \|$ any operator norm, and $c(k)=k $ or $k-1$ according to whether $k$ is odd or even.  
\elm   

\thr Let $N$ be  any positive  number and $p$ any positive integer. Let $P_{\bold{A}}(z)= I\cdot  z^m + A_{m-1}z^{m-1}+ \cdots + A_1 z + A_0$ and $P_{\bold{\bar{A}}}(z)= I\cdot  z^m + \bar{A}_{m-1}z^{m-1}+ \cdots + \bar{A}_1 z + \bar{A}_0$ be monic matrix polynomials such that  $|\bold{A}|_p \leq N$ and $|\bold{\bar{A}}|_p\leq N$. Then    there exists a constant $c$ such that 
$$\dist(\sigma(\bold{A}), \sigma(\bold{\bar{A}})) \leq c \|\bold{A}-\bold{\bar{A}}\|_p^{\frac{1}{mn}}. $$
\ethr

\pf 
Applying Lemma \ref{bhatia-arbitrary} for the companion matrices $C_\bold{A}$ and $C_\bold{\bar{A}}$ with the operator  norm $\|\cdot \|_p$ we obtain 
$$\dist(\sigma(C_\bold{A}), \sigma(C_\bold{\bar{A}})) \leq  c(mn) \cdot  (mn)^{\frac{1}{mn}} \cdot  (2M)^{1-\frac{1}{mn}}\cdot \|C_\bold{A}-C_\bold{\bar{A}}\|_p^{\frac{1}{mn}}, $$
where $M=\max\{\|C_\bold{A}\|_p, \|C_\bold{\bar{A}}\|_p\}$. Then it follows from  Lemma  \ref{norm-companion} (2) and  the equalities   $\sigma(\bold{A})=\sigma(C_\bold{A})$ and $\sigma(\bold{\bar{A}})=\sigma(C_\bold{\bar{A}})$ that 
$$\dist(\sigma(\bold{A}), \sigma(\bold{\bar{A}})) \leq  c(mn) \cdot  (mn)^{\frac{1}{mn}} \cdot  (2M)^{1-\frac{1}{mn}}\cdot \|\bold{A}-\bold{\bar{A}}\|_p^{\frac{1}{mn}}. $$
Now we need to estimate $\|C_\bold{A}\|_p$ and $ \|C_\bold{\bar{A}}\|_p$.   By a comparison of the operator $p$-norm $\|C_\bold{A}\|_p$ and the matrix $p$-norm $|C_\bold{A}|_p$ given by M. Gohberg \cite{G1987} (see also in \cite{T2000}, we have 
\begin{equation}\label{compare-pnorm}
\|C_\bold{A}\|_p \leq (mn)^{\max\{1/p'-1/p,0\}} |C_\bold{A}|_p.
\end{equation}
It is easy to see that for $1\leq p < \infty$,  
\begin{equation}\label{equ-compare-norm1}
|C_\bold{A}|_p^p = \sum_{i=0}^{m-1}|A_i|_p^p + (m-1)n = |\bold{A}|_p^p + (m-1)n \leq N^p + (m-1)n,
\end{equation}
and for $p=\infty$, 
\begin{equation}\label{equ-compare-norm2}
|C_\bold{A}|_\infty  =  \max\{|\bold{A}|_\infty, 1\}\leq \max\{N,1\}.
\end{equation}
 It follows from the inequalities (\ref{compare-pnorm}),  (\ref{equ-compare-norm1}) and (\ref{equ-compare-norm2}) that  
$$\|C_\bold{A}\|_p \leq M':=\begin{cases}
(mn)^{\max\{1/p'-1/p,0\}} \big(N^p + (m-1)n\big)^{\frac{1}{p}} & (1\leq p < \infty)\\
mn\max\{N,1\} & (p=\infty).
\end{cases} $$
Similarly,
$$\|C_\bold{\bar{A}}\|_p \leq M'.$$
Then the number  $$c:= c(mn) \cdot  (mn)^{\frac{1}{mn}} \cdot  (2M')^{1-\frac{1}{mn}}$$
satisfies the requirement. 
\epf 

For  matrix polynomials which are not necessarily monic, we have the following estimation, only for operator $1$-norm. 

\thr \label{thr1} Let $\bold{\bar{A}}=(\bar{A}_0,\cdots, \bar{A}_m)$ be a fixed $(m+1)$-tuple  such that $\bar{A}_m$ non-singular. Then there exists a constant $c>0$ such that for every pair of   matrix polynomials  $P_{\bold{A}}(z)= A_m\cdot  z^m + A_{m-1}z^{m-1}+ \cdots + A_1 z + A_0$ and $P_{\bold{{A}'}}(z)= A'_m\cdot  z^m +  {A}'_{m-1}z^{m-1}+ \cdots + {A}'_1 z +  {A}'_0$ with   $\|\bold{A}-\bold{\bar{A}}\|_1 < \dfrac{1}{\|\bar{A}^{-1}_m\|_1}$ and $\|\bold{A'}-\bold{\bar{A}}\|_1 < \dfrac{1}{\|\bar{A}^{-1}_m\|_1}$ we have 
$$\dist(\sigma(\bold{A}), \sigma(\bold{{A}'})) < c  \|\bold{A}-\bold{A'}\|_1^{\frac{1}{mn}}. $$
\ethr 

\pf It follows from the sub-multiplicative property of operator $1$-norm and the formula (\ref{oper1norm}) that 
$$ \|\bar{A}^{-1}_m(A_m-\bar{A}_m)\|_1 \leq \|\bar{A}^{-1}_m \|_1\cdot \|A_m-\bar{A}_m\|_1\leq  \|\bar{A}^{-1}_m \|_1 \cdot \|\bold{A}-\bold{\bar{A}}\|_1 < 1. $$
Then  $A_m$ is  also  non-singular (cf. \cite[Theorem 2.3.4]{GL2013}), thus each element of $\sigma(\bold{ A})$ is finite.  Similarly, each element of $\sigma(\bold{ A'})$ is also finite.

Since $\sigma(\bold{A'})$ is the set of roots of the polynomial $\det(P_\bold{A'}(z))\in \mathbb C[z]$ whose degree is $mn$, it is easy to see that 
$$\dist(x, \sigma(\bold{A'}))^{mn}\leq  |\det(P_{\bold{A'}}(x))|, \mbox{ for all } x\in \mathbb C.$$
In particular, for any    $\lambda \in \sigma(\bold{{A}})$, we have
\begin{equation} \label{equ6}
 \dist(\lambda, \sigma(\bold{A'}))^{mn} \leq |\det(P_{\bold{A'}}(\lambda))|.
 \end{equation}
 
Since 
$\det(P_{ \bold{A}}(\lambda))=0,$
we have 
\begin{align} \label{equ7}
|\det(P_{\bold{A'}}(\lambda))| & =  |\det(P_{\bold{A'}}(\lambda)) - \det(P_{\bold{A}}(\lambda))|.
\end{align}

The following inequality is useful for the later estimation (cf. \cite[2007, p.107]{Bh2007}): For any matrices $A,B \in \mathbb C^{n\times n}$ and for any integer $p>0$,  we have 
\begin{equation}\label{det-ineq}
|\det(A) - \det(B)| \leq n \max\{\|A\|_p, \|B\|_p\}^{n-1}\cdot \|A-B\|_p.
\end{equation} 
Applying the inequality (\ref{det-ineq}), we get
\begin{equation} \label{equ8}
|\det(P_{\bold{A'}}(\lambda)) - \det(P_{ \bold{A}}(\lambda))| \leq n \max\{  \|P_{ \bold{A'}}(\lambda)\|_1, \|P_{\bold{A}}(\lambda)\|_1\}^{n-1}\cdot \|P_{\bold{A}}(\lambda) - P_{ \bold{A'}}(\lambda)\|_1. 
\end{equation}
Using again the sub-multiplicativity  of operator $1$-norm and the formula (\ref{oper1norm}), we have
$$\|P_{\bold{A}}(\lambda) - P_{ \bold{A'}}(\lambda)\|_1 = \|\sum_{i=0}^m (A_i -A'_i)\lambda^i \|_1 \leq \sum_{i=0}^m |\lambda|^i \cdot \|\bold{A}-\bold{A'}\|_1.$$
It follows from a matrix version of Cauchy theorem (cf. \cite[Theorem 3.4]{DLN2017}) that for   $\lambda \in \sigma(\bold{{A}})$, we have 
$$ |\lambda| < 1 + \|{A}_m^{-1}\|_1 \cdot \max\{\|{A}_i\|_1, i=0,\cdots,m-1\} \leq 1+\|{A}_m^{-1}\|_1\cdot \|\bold{{A}}\|_1.$$
On the other hand, by the sub-additivity of operator norm, we have 
$$\|\bold{A}\|_1\leq \|\bold{A}-\bold{\bar{A}}\|_1 + \|\bold{\bar{A}}\|_1 <  \dfrac{1}{\|\bar{A}^{-1}_m\|_1} + \|\bold{\bar{A}}\|_1.$$ 
Hence for each $\lambda \in \sigma(\bold{A})$ we have 
$$|\lambda| < 2 +  \|\bar{A}_m^{-1}\|_1\cdot \|\bold{\bar{A}}\|_1.$$
Then 
$$\sum_{i=0}^m |\lambda|^i < L:=\dfrac{(2 +  \|\bar{A}_m^{-1}\|_1\cdot \|\bold{\bar{A}}\|_1)^{m+1}-1}{1+ \|\bar{A}_m^{-1}\|_1\cdot \|\bold{\bar{A}}\|_1}.$$
 This yields
\begin{equation}\label{equ9}
\|P_{\bold{A}}(\lambda) - P_{ \bold{A'}}(\lambda)\|_1 < L\cdot \|\bold{A}-\bold{A'}\|_1.
\end{equation}
By a similar  estimation, we get 
\begin{align}
\|P_{ \bold{A}}(\lambda)\|_1 & < L \cdot \|\bold{A}\|_1\leq L\cdot \big(\dfrac{1}{\|\bar{A}^{-1}_m\|_1} + \|\bold{\bar{A}}\|_1\big),\label{equ10}\\
\|P_{\bold{A'}}(\lambda)\|_1 & <  L\cdot \|\bold{A'}\|_1\leq L\cdot \big(\dfrac{1}{\|\bar{A}^{-1}_m\|_1} + \|\bold{\bar{A}}\|_1\big).\label{equ11}
\end{align}
It follows from the inequalities (\ref{equ6}), (\ref{equ7}), (\ref{equ8}), (\ref{equ9}), (\ref{equ10}) and  (\ref{equ11})  that  
  $$ \dist(\lambda, \sigma(\bold{A'}))  < c \|\bold{A}-\bold{A'}\|_1^{\frac{1}{mn}},$$ 
 where  $$ c:=\Big(n\cdot L^{n-1}\cdot \big(\dfrac{1}{\|\bar{A}^{-1}_m\|_1} + \|\bold{\bar{A}}\|_1\big)^{n-1}\Big)^{\frac{1}{mn}}.$$
  Similarly, for every $\lambda' \in \sigma(\bold{A'}),$ we have also 
 $$ \dist(\lambda, \sigma(\bold{A}))  < c \|\bold{A}-\bold{A'}\|_1^{\frac{1}{mn}}.$$
 It follows that 
 $$ \dist(\sigma(\bold{A}),\sigma(\bold{A'}))  < c \|\bold{A}-\bold{A'}\|_1^{\frac{1}{mn}}.$$
\epf 

\section*{Acknowledgements} The author would like to thank Dr. Minh Toan HO for his suggestion to establish a Wielandt's inequality for matrix polynomials in Section 2. He would also like to express his sincere gratitude to the anonymous referees for their useful comments and suggestions to improve the results in the original version of this paper. The final version of this paper was finished during the visit of the  author   at the Vietnam Institute for Advanced Study in Mathematics (VIASM). He thanks VIASM for financial support and hospitality.

\end{document}